\theoremstyle{plain}
\newtheorem{theo}{Theorem}[section]
\newtheorem*{thmcite}{Theorem}
\newtheorem{prop}[theo]{Proposition}
\newtheorem{lemm}[theo]{Lemma}
\theoremstyle{definition}
\newtheorem{rema}[theo]{Remark}
\DeclareSymbolFont{pletters}{OT1}{cmr}{m}{sl}
\DeclareMathSymbol{s}{\mathalpha}{pletters}{`s}
\def\eps{\varepsilon}
\def\mez{\frac{1}{2}}
\def\xR{\mathbf{R}}
\numberwithin{equation}{section}
\date{}
\title{A remark on quantitative unique continuation    from subsets of   the boundary of positive measure  }
  \author{Nicolas Burq and Claude Zuily}
\begin{document}
\maketitle
  \begin{abstract}The question of unique continuation of harmonic functions  in a domain $\Omega \subset \mathbb{R}^d$ with boundary $\partial \Omega$, satisfying Dirichlet boundary conditions and with normal derivatives vanishing  on a subset  $\omega$ of the boundary  is a classical problem. When $\omega$  contains an open subset of the boundary it is   a consequence of Carleman estimates (uniqueness for second order elliptic operators across an hypersurface). The case where $\omega$ is a set  of positive $(d-1)$ dimensional measure has attracted a lot of attention,  see e.g.~\cite{L, Ad-Esc1, To},  where qualitative results have been obtained in various situations.   The main open questions (about uniqueness) concern now Lipschitz domains and variable coefficients. Here, using results by Logunov and Malinnikova~\cite{LM1, LM2},  we consider the simpler case of $W^{2, \infty}$ domains but prove {\em quantitative} uniqueness both for {\em Dirichlet} and {\em Neumann} boundary conditions. As an application, we  deduce {\em quantitative} estimates for the Dirichlet and Neumann Laplace eigenfunctions on a $W^{2, \infty}$ domain with boundary.
  \end{abstract}

\section{Introduction}
The question discussed in this note starts with a famous   example by J. Bourgain and T. Wolff \cite{BW} saying that,

"in dimension $d \geq 3$, there exists a non trivial harmonic function  in the half space $\xR^d_+,$ which is $C^1$ up to the boundary, such that the function and its normal derivative vanish simultaneously on a set of positive measure of $\partial \xR^d_+".$

As emphasized by the authors the problem of constructing more regular counterexample ($C^2$ or $C^\infty$) is still open. Notice that this  example has been extended to arbitrary $C^{1, \alpha} $ domains in \cite{W}.

After that, the following  conjecture has been stated, which goes back to \cite {L}.

{\bf Conjecture}
Let $\Omega$ be a Lipschitz domain with boundary $\partial \Omega$ and let $\Sigma \subset \partial \Omega$ be an open subset. Let $u$ be a harmonic function in $\Omega$ which is continuous in $\overline{\Omega}.$ Suppose that $u$ vanishes in $\Sigma$ and its normal derivative $\partial_\nu u$ vanishes in a subset $\omega\subset \Sigma$ of strictly positive measure. Then $u \equiv 0$ in $\Omega.$

The same question can be, of course, asked for  elliptic operators with variable coefficients.

Several authors gave partial answer to this conjecture, see \cite {L}, \cite{AEK}, \cite{Ad-Esc1}, \cite{KN}. The most  recent result in this direction is due to    X. Tolsa \cite {To}   who gave, in the case of  the flat Laplacian, a positive answer to the conjecture for Lipschitz domains with small local Lipschitz constant.   However all   the above  papers are mainly qualitative and no estimate is given.

The purpose of this short note is to show that the deep  recent result by A. Logunov and E. Malinnikova \cite{LM2} can be used to give a quantitative form of this type of results for Laplace operators with respect to    Lipschitz metrics,   in the case  of open sets with $W^{2, \infty}$  ($C^{1,1}$) boundary. As an application we give estimates on the eigenfunctions of the Dirichlet or Neumann Laplace operator in the case where $\Omega$ is a relatively compact open set with $W^{2, \infty}$ boundary

\section{The results}
In a  domain  $\Omega \subset \xR^d$ with $W^{2, \infty}$ boundary    let  $g=(g_{ij}) $ be a  Riemanian metric which is assumed to be   locally Lipschitz in $\Omega.$ We denote by $g^{-1}= (g^{ij})$ the inverse metric and we denote by,
$$\Delta_g  = \frac{1}{\kappa(x)}\sum_{j,k= 1}^d \partial_j\big(\kappa(x)g^{jk}(x)\partial_k\big),$$ 
  the Laplace operator with respect to the   positive density $\kappa$ and to the positive definite  metric $g$.

If $Q \in \partial \Omega$ we denote by $B_r(Q)$ the ball in $\xR^d$ with center $Q$ and radius $r>0,$ and we set, 
$$D_r(Q) = B_r(Q)\cap \Omega, \quad \Gamma_r(Q) = B_r(Q)\cap \partial \Omega.$$
For a subset of $\omega \subset\partial \Omega$ we denote by $|\omega\vert_{d-1} $ its $(d-1)$ dimensional measure (here we endow $\partial \Omega$ with the metric $g\mid_{\partial \Omega}$). 
The purpose of this note is to show the following result.
\begin{theo}\label{result}
Let   $\Omega \subset \xR^d$ be an open set with $W^{2, \infty}$ boundary.  Let $m_0>0$.  There exist $r_0 >0,$ such that for any   $Q \in \partial \Omega$, $r \in (0,r_0)$,   there exist $C >0, \alpha \in (0,1)$ depending on $r, m_0$ such that for $m \geq m_0,$     for any  $\omega \subset \Gamma_r(Q)$,  with $|\omega|_{d-1} \geq m$, and any   $u \in W^{1,\infty}(\overline{\Omega})$   solution of $\Delta_gu = 0$ in $\Omega,$ such   that $ u = 0$ on $\Gamma_r(Q) $  we have,
$$\sup_{D_{\frac{r}{2}}(Q)} (\vert u \vert + \vert \nabla u \vert) \leq C\big(\sup_{\omega} \vert \partial_\nu u\vert\big)^\alpha \big(\sup_{D_r (Q)}   \vert \nabla u \vert\big)^{1-\alpha}.$$
\end{theo}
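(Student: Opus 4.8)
The plan is to localize near $Q$, flatten the boundary, reduce to a divergence-form elliptic equation with Lipschitz coefficients in a full ball, and then combine a classical quantitative Cauchy estimate with the Logunov--Malinnikova bounds, optimizing over the doubling index of $u$ at $Q$.

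\emph{Step 1 (flattening and reflection).} Since $\partial\Omega$ is $W^{2,\infty}$, a $W^{2,\infty}$ change of variables straightens $\partial\Omega$ near $Q$; its differential being Lipschitz, $\Delta_g$ becomes an operator $\tilde\kappa^{-1}\partial_j(\tilde\kappa\,\tilde g^{jk}\partial_k)$, uniformly elliptic with Lipschitz coefficients, and $D_r(Q),\Gamma_r(Q)$ become a half-ball and its flat face $\Gamma$. After a further boundary-adapted change of coordinates (arranging $\tilde g^{jd}=0$ on $\Gamma$, which uses the full $W^{2,\infty}$ regularity of $\partial\Omega$ and is needed so that the next step keeps Lipschitz coefficients), the odd reflection $v$ of $u$ across $\Gamma$ solves $Lv=0$, with $L$ uniformly elliptic and with Lipschitz coefficients, in a full ball $B$; it vanishes on the diametral hyperplane $\Pi=\{x_d=0\}$, and on $\Pi$ all tangential derivatives of $v$ vanish, so $|\nabla v|\le C|\partial_\nu u|$ there, with $|\partial_d v|$ comparable to $|\partial_\nu u|$.

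\emph{Step 2 (the two quantitative inputs and the optimization).} Let $\mathcal N$ be the doubling index of $v$ (hence of $u$) at $Q$, controlled through the almost-monotone frequency function of $L$. On the one hand, a large $\mathcal N$ forces decay between scales: using $u=0$ on $\Gamma_r(Q)$ and Poincaré,
$$\sup_{D_{r/2}(Q)}\big(|u|+|\nabla u|\big)\le C\,e^{-c\mathcal N}\,\sup_{D_r(Q)}|\nabla u|.$$
On the other hand --- and this is where \cite{LM2} enters --- the polynomial control of the doubling index, hence of the vanishing order of the normal trace, yields a Remez-type inequality on $\Pi$: for $\omega\subset\Gamma_{r/2}(Q)$ with $|\omega|_{d-1}\ge m_0$,
$$\sup_{\Gamma_{r/2}(Q)}|\partial_\nu u|\le\Big(\tfrac{C}{m_0}\Big)^{C\mathcal N}\sup_\omega|\partial_\nu u|.$$
Plugging this into the classical quantitative Cauchy uniqueness estimate up to a $C^{1,1}$ boundary (Carleman estimates for $\Delta_g$ with Lipschitz metric, or interior propagation of smallness along a chain of balls after the reflection), valid because $u=0$ on $\Gamma_r(Q)$, gives
$$\sup_{D_{r/2}(Q)}\big(|u|+|\nabla u|\big)\le C\Big(\tfrac{C}{m_0}\Big)^{C\mathcal N\alpha_0}\,\epsilon^{\alpha_0}\,\mathcal M^{1-\alpha_0},$$
where $\epsilon:=\sup_\omega|\partial_\nu u|$, $\mathcal M:=\sup_{D_r(Q)}|\nabla u|$, and $\alpha_0\in(0,1)$. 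Comparing with the decay bound (and discarding the trivial case $\epsilon\ge c\mathcal M$), a dichotomy according to the size of $\mathcal N$ relative to $\log(\mathcal M/\epsilon)$, together with the choice
$$\alpha:=\frac{c\,\alpha_0}{\,c+C\alpha_0\log(C/m_0)\,}\in(0,1),$$
makes both alternatives yield $\sup_{D_{r/2}(Q)}(|u|+|\nabla u|)\le C\,\epsilon^{\alpha}\,\mathcal M^{1-\alpha}$; since $m\ge m_0$ and $C,\alpha$ depend only on $r$, $m_0$ and the structural constants, this is the theorem.

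\emph{Main obstacle.} The routine parts are the flattening and the classical Cauchy estimate. The crux, and the reason the deep result of \cite{LM2} is needed, is the Remez-type inequality of Step 2: the normal trace $\partial_\nu u$ is not a solution of an elliptic equation (the coefficients being only Lipschitz, $Lv=0$ cannot be differentiated), so a Remez inequality cannot be applied to it directly, and its vanishing order must instead be controlled through the frequency of $v$. A secondary, more technical, difficulty is carrying out the boundary-adapted change of coordinates in Step 1 so that the reflected coefficients remain Lipschitz --- exactly where the $W^{2,\infty}$, rather than merely Lipschitz, regularity of $\partial\Omega$ is used.
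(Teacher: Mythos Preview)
Your Step~1 coincides with the paper's: flatten so that $\tilde g^{jd}=0$ on the boundary, then odd-reflect to obtain $v$ solving $Pv=0$ with Lipschitz coefficients in a full neighborhood $\mathcal O$. In Step~2, however, you take a detour: you track the doubling index $\mathcal N$, claim a Remez inequality $\sup_{\Gamma_{r/2}}|\partial_\nu u|\le(C/m_0)^{C\mathcal N}\sup_\omega|\partial_\nu u|$ from \cite{LM2}, feed it into a classical Carleman/Cauchy estimate, and run a dichotomy on the size of $\mathcal N$. The paper instead applies \cite[Theorem~5.1]{LM2} \emph{directly} to $v$: that theorem already states $\sup_K|\nabla v|\le C(\sup_{\tilde\omega}|\nabla v|)^\alpha(\sup_{\mathcal O}|\nabla v|)^{1-\alpha}$ for any $\tilde\omega$ of positive $(d-1)$-measure; since $v=0$ on $\Pi$, tangential derivatives vanish there and $\sup_{\tilde\omega}|\nabla v|$ reduces to $\sup_\omega|\partial_\nu u|$, a Poincar\'e inequality controls $|u|$, and the proof ends in two lines. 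Your route is plausible, but the doubling-index/Remez/Cauchy/optimization machinery you assemble is essentially the \emph{proof} of \cite[Theorem~5.1]{LM2} rather than its application, and the specific Remez estimate you invoke for the non-elliptic trace $\partial_\nu u$ is precisely the deep internal step of that paper --- it would have to be extracted from inside the argument rather than read off a stated result.
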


In the case of Neumann boundary conditions, we have also the following result.
\begin{theo}\label{result2}
Let   $\Omega \subset \xR^d$ be an open set with $W^{2, \infty}$ boundary.  Let $m_0>0$. There exist  $r_0 >0$ such that for any   $Q \in \partial \Omega$, $r \in (0,r_0)$, there exist $C >0, \alpha \in (0,1)$ depending on $r, m_0$ such that  for $m \geq m_0$      for any  $\omega \subset \Gamma_r(Q)$, with $|\omega|_{d-1} \geq m$  and any   $u \in W^{1,\infty}(\overline{\Omega})$   solution of $\Delta_gu = 0$ in $\Omega,$  such   that $ \partial_\nu u = 0$ on $\Gamma_r(Q) $  we have,
$$\sup_{D_{\frac{r}{2}}(Q)}  \vert \nabla u \vert  \leq C\big(\sup_{\omega} \vert \nabla u\vert\big)^\alpha \big(\sup_{D_r (Q)}   \vert \nabla u \vert\big)^{1-\alpha}.$$

\end{theo}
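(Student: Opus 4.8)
Let me think about how to prove this. The Dirichlet version (Theorem \ref{result}) presumably works by: straighten the boundary with a $W^{2,\infty}$ diffeomorphism, then reflect the solution across the flat boundary (extending the metric appropriately, which stays Lipschitz), getting a solution of a divergence-form elliptic equation in a full ball. On the flat boundary $\Gamma_r(Q)$, $u=0$, so only the normal derivative is relevant; and $\partial_\nu u$ restricted to $\omega$ being small on a set of measure $\geq m$ should, via the Logunov–Malinnikova results on the vanishing sets/nodal sets of solutions, propagate to a smallness estimate on $D_{r/2}(Q)$. The three-sphere-type inequality structure (with exponent $\alpha$) is the quantitative packaging.

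For the Neumann case, the plan is to reduce to the Dirichlet case — or rather to the same machinery — by a conjugate-function trick, which is cleanest in dimension $2$ but must be replaced by something more robust in general dimension. The key observation is: if $\partial_\nu u = 0$ on $\Gamma_r(Q)$, then after straightening the boundary one can do an \emph{even} reflection of $u$ across the flat piece (rather than the odd reflection used in the Dirichlet case), again producing a solution $\tilde u$ of a divergence-form elliptic equation with Lipschitz coefficients in the full ball $B_r(Q)$. Now on the flat boundary, $\tilde u$ is smooth across and its tangential gradient $\nabla_{\mathrm{tan}} u$ is the relevant object — and the hypothesis is that $|\nabla u| = |\nabla_{\mathrm{tan}} u|$ is small on $\omega$ (the normal component vanishes identically on $\Gamma_r(Q)$). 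So we are reduced to: a solution $\tilde u$ of an elliptic equation whose \emph{tangential derivatives} are small on a subset of positive measure of a hyperplane through the ball. Each tangential derivative $v = \partial_{x_j}\tilde u$ ($j$ a tangential index) again solves a divergence-form elliptic equation (differentiating the equation; the Lipschitz coefficients cause a loss but $v$ still solves $\mathrm{div}(g\nabla v) = \mathrm{div}(F)$ with controlled $F$, or one works with difference quotients), and $v$ is small on $\omega \subset \{x_d = 0\}$ with $|\omega|_{d-1}\geq m$. Applying the Logunov–Malinnikova-based propagation-of-smallness estimate (the same one underlying Theorem \ref{result}, which handles smallness of a solution on a positive-measure subset of a hypersurface) to each such $v$ gives $\sup_{D_{r/2}} |\nabla u| \lesssim (\sup_\omega |\nabla u|)^\alpha(\sup_{D_r}|\nabla u|)^{1-\alpha}$ after summing over the finitely many tangential directions and absorbing the normal derivative via the equation itself.

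I would carry out the steps in this order: (1) fix $Q$, choose $r_0$ from the $W^{2,\infty}$ chart so that a bi-Lipschitz, $W^{2,\infty}$ map $\Phi$ flattens $\Gamma_{r_0}(Q)$ to a piece of $\{x_d=0\}$ and transports $\Delta_g$ to a divergence-form operator $L = \kappa^{-1}\mathrm{div}(\kappa \tilde g \nabla \cdot)$ with $\tilde g$ Lipschitz and uniformly elliptic; (2) perform the even reflection $x_d \mapsto -x_d$ of $\tilde u$ and of the coefficients across $\{x_d=0\}$, checking that the reflected coefficients remain Lipschitz (this uses $\partial_\nu u = 0$, i.e. the conormal derivative vanishes, exactly so that no singular distributional term appears on the interface) and that $\tilde u$ solves $L\tilde u = 0$ in the full ball; (3) note $\nabla u$ on $\Gamma_r(Q)$ equals the tangential gradient of $\tilde u$ there, so the hypothesis says $|\nabla_{\mathrm{tan}}\tilde u| \le \sup_\omega|\nabla u|$ on $\omega$; (4) for each tangential index $j$, set $v_j = \partial_j \tilde u$ (or a difference quotient), observe $v_j$ solves a divergence-form elliptic equation with a right-hand side in divergence form controlled by $\sup_{D_r}|\nabla u|$, and that $v_j$ is small on $\omega$; (5) apply the quantitative propagation-of-smallness estimate from the Logunov–Malinnikova circle of ideas (the core lemma already used for Theorem \ref{result}) to each $v_j$, getting interior smallness with an interpolation exponent; (6) recover the normal derivative $\partial_d\tilde u$ from the equation $L\tilde u=0$ (it is controlled by the tangential derivatives and, since $\tilde u$ is harmonic-type, by $\sup_{D_r}|\nabla u|$), sum up, and undo the change of variables, absorbing all constants into $C$ and $\alpha$.

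The main obstacle is step (4)–(5): solutions of Lipschitz-coefficient elliptic equations are not $C^1$ in general, so "$\partial_j \tilde u$ solves an elliptic equation" is not literally true — one must work with the first-order difference quotients $\tau_h^j \tilde u = (\tilde u(\cdot + he_j) - \tilde u)/h$, which do solve a divergence-form equation $L_h(\tau_h^j\tilde u) = \mathrm{div}(G_h)$ with $\|G_h\|$ bounded uniformly in $h$ by $\mathrm{Lip}(\tilde g)\sup|\nabla\tilde u|$, apply the propagation-of-smallness estimate uniformly in $h$, and pass to the limit $h\to 0$; making the Logunov–Malinnikova propagation-of-smallness estimate apply uniformly to this $h$-family (with a right-hand side, not just for genuine solutions) and controlling the dependence on the modulus of the smallness set $\omega$ uniformly is the delicate technical point, and is presumably where the hypothesis $|\omega|_{d-1}\ge m \ge m_0$ and the $r$-dependence of $C,\alpha$ enter.
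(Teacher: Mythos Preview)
Your steps (1)--(3) match the paper exactly: straighten the boundary via Proposition~\ref{propBM}, perform an \emph{even} reflection of $\tilde u$ across $\{x_d=0\}$ (the Neumann condition $\partial_{x_d}\tilde u(x',0)=0$ is precisely what makes the reflected $v$ regular enough and a genuine solution of the reflected divergence-form operator $P$ in the full neighborhood $\mathcal{O}$), and observe that on $\omega$ one has $|\nabla v|=|\nabla_{\mathrm{tan}}\tilde u|=|\nabla u|$.

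The divergence from the paper, and the gap, is in steps (4)--(6). You propose to differentiate tangentially, set $v_j=\partial_j\tilde u$, and apply propagation of smallness to each $v_j$; you then correctly worry that $v_j$ only solves an \emph{inhomogeneous} divergence-form equation (because the coefficients are merely Lipschitz), and that the Logunov--Malinnikova estimate is stated for genuine solutions. You flag this as ``the delicate technical point'' but do not resolve it, and indeed extending \cite[Theorem~5.1]{LM2} to equations with a right-hand side in divergence form is not obviously free.

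The paper sidesteps this entirely, and this is the point you are missing: \cite[Theorem~5.1]{LM2} is already an estimate on $|\nabla v|$, not on $|v|$. It reads
\[
\sup_{K}|\nabla v|\leq C\bigl(\sup_{\widetilde\omega}|\nabla v|\bigr)^{\alpha}\bigl(\sup_{\mathcal{O}}|\nabla v|\bigr)^{1-\alpha}
\]
for any solution $v$ of $Pv=0$ and any $\widetilde\omega\subset\mathcal{O}$ of positive $(d-1)$-measure. So once you have the reflected solution $v$ from step~(2), you apply this directly: $\sup_{\widetilde\omega}|\nabla v|=\sup_{\omega}|\nabla u|$ by step~(3), and undoing the diffeomorphism gives the theorem. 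No tangential differentiation, no difference quotients, no inhomogeneous equations. The paper's entire proof of Theorem~\ref{result2} is one line: replace the odd reflection by the even one.
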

A natural question would be to ask for similar quantitative  results in the case of  Lipschitz boundary.

As a consequence of these {\em quantitative} results we get   estimates  of Laplace Dirichlet or Neumann eigenfunctions, both locally and globally. 
\begin{theo} \label{result3}Consider a $W^{2, \infty}$ relatively compact domain $\Omega$ endowed with a Lipschitz metric g and density $\kappa$. Consider the eigenfunctions of the Laplace operator on $\Omega$ with Dirichlet  (resp. Neumann)  boundary conditions,
$$ - \Delta_g e_n = \lambda_n ^2 e_n, \quad (\lambda_n  \geq 0), \qquad e_n  \arrowvert_{\partial\Omega} =0 \quad (\text{resp.  $\partial_\nu e_n \arrowvert_{\partial \Omega}  =0$}).
$$ 
Let  $Q \in \partial \Omega$ and $m_0>0$. There exists  $r_0 >0$ such that for any  $r \in (0,r_0)$, there exists $C, \alpha \in (0,1)$ depending on $r, m_0$ such that, with the above notations,  for any  $m \geq m_0,$   any $\omega \subset \Gamma_r(Q)$, with $ |\omega|_{d-1} \geq m$, and  any~$n$,

Dirichlet case:
\begin{equation}\label{dirichlet}
  \sup_{D_{\frac{r}{2}}(Q)} (\vert e_n  \vert + \vert \nabla e_n  \vert)  \leq Ce^{\lambda_n r} \big(\sup_{\omega} \vert \partial_\nu e_n\vert\big)^\alpha \big(\lambda_n \sup_{D_r (Q)}   \vert  e_n \vert + \sup_{D_r (Q)}   \vert \nabla e_n \vert\big)^{1-\alpha},
\end{equation}
  Neumann case:
\begin{equation}\label{neumann}
  \sup_{D_{\frac{r}{2}}(Q)}   \vert \nabla e_n  \vert   \leq Ce^{\lambda_n r} \big( \lambda_n \sup_{\omega} \vert e_n\vert+ \sup_{\omega} \vert\nabla e_n\vert\big)^\alpha \big(\lambda_n \sup_{D_r(Q)} \vert e_n\vert  + \sup_{D_r (Q)}    \vert \nabla e_n \vert\big)^{1-\alpha}.  
\end{equation}
\end{theo}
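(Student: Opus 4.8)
The plan is to deduce Theorem~\ref{result3} directly from Theorems~\ref{result} and~\ref{result2} by the standard device of lifting an eigenfunction to a harmonic function in one extra variable. Given an eigenfunction $e_n$ with $-\Delta_g e_n = \lambda_n^2 e_n$ on $\Omega$, set $\widetilde \Omega = \Omega \times (-1,1) \subset \xR^{d+1}$, equip it with the product metric $\widetilde g = g \oplus \di t^2$ and density $\widetilde \kappa(x,t) = \kappa(x)$, and define $v(x,t) = e_n(x)\cosh(\lambda_n t)$. A direct computation gives $\Delta_{\widetilde g} v = (\Delta_g e_n)\cosh(\lambda_n t) + \lambda_n^2 e_n \cosh(\lambda_n t) = 0$, so $v$ is harmonic for $\Delta_{\widetilde g}$ on $\widetilde\Omega$. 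Moreover $\widetilde\Omega$ has $W^{2,\infty}$ boundary (at least near $\partial\Omega \times (-1,1)$, which is all that matters since $Q \in \partial\Omega$ corresponds to $(Q,0) \in \partial\widetilde\Omega$), $\widetilde g$ is Lipschitz, and the outer normal to $\partial\widetilde\Omega$ along $\partial\Omega\times(-1,1)$ is the same as the outer normal $\nu$ to $\partial\Omega$, so $\partial_{\widetilde\nu} v(x,t) = (\partial_\nu e_n)(x)\cosh(\lambda_n t)$ there. In particular the Dirichlet (resp. Neumann) condition $e_n|_{\partial\Omega} = 0$ (resp. $\partial_\nu e_n|_{\partial\Omega}=0$) lifts to $v|_{\partial\widetilde\Omega} = 0$ (resp. $\partial_{\widetilde\nu} v|_{\partial\widetilde\Omega} = 0$) on $\Gamma_r((Q,0)) \cap (\partial\Omega\times(-1,1))$.

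Next I would apply Theorem~\ref{result} (Dirichlet case) or Theorem~\ref{result2} (Neumann case) to $v$ on $\widetilde\Omega$, with the boundary point $\widetilde Q = (Q,0)$, the same radius $r$ (shrinking $r_0$ if necessary so that $\Gamma_r(\widetilde Q) \subset \partial\Omega\times(-1,1)$ and so $r_0 < 1$), and the set $\widetilde\omega = \omega \times \{0\} \subset \Gamma_r(\widetilde Q)$. One subtlety: $\widetilde\omega$ has $(d-1)$-dimensional measure $|\omega|_{d-1} \ge m$ but it is a $(d-1)$-dimensional subset of the $d$-dimensional boundary $\partial\widetilde\Omega$, hence has zero $d$-dimensional measure; so instead I take $\widetilde\omega = \omega \times (-\delta,\delta)$ for a fixed small $\delta$, which has positive $d$-dimensional measure bounded below in terms of $m$ and $\delta$, and this is the $\omega$ to feed into the theorems (absorbing $\delta$ into the constants, which are allowed to depend on $r,m_0$). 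On $\widetilde\omega$ one has $|\partial_{\widetilde\nu} v| = |\partial_\nu e_n|\cosh(\lambda_n t) \le \cosh(\lambda_n r_0)\sup_\omega|\partial_\nu e_n| \le e^{\lambda_n r}\sup_\omega|\partial_\nu e_n|$ (and similarly $|\nabla_{x,t} v| \le e^{\lambda_n r}(\lambda_n\sup_\omega|e_n| + \sup_\omega|\nabla e_n|)$ in the Neumann case, using $|\partial_t v| = \lambda_n|e_n|\sinh(\lambda_n t)$ and $|\nabla_x v| = |\nabla e_n|\cosh(\lambda_n t)$). The same elementary bounds on $D_r(\widetilde Q) \subset D_r(Q)\times(-r,r)$ control $\sup_{D_r(\widetilde Q)}|\nabla_{x,t} v|$ by $e^{\lambda_n r}(\lambda_n\sup_{D_r(Q)}|e_n| + \sup_{D_r(Q)}|\nabla e_n|)$, which is exactly the second factor appearing in \eqref{dirichlet}–\eqref{neumann}.

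Finally, on the left-hand side, the conclusion of Theorem~\ref{result}/\ref{result2} controls $\sup_{D_{r/2}(\widetilde Q)}(|v| + |\nabla_{x,t} v|)$, and restricting to the slice $t=0$ gives $|v(x,0)| = |e_n(x)|$ and $|\nabla_{x,t} v(x,0)| = |\nabla e_n(x)|$ (since $\partial_t v(x,0) = 0$), so $\sup_{D_{r/2}(Q)}(|e_n| + |\nabla e_n|)$ is bounded by the right-hand side. Collecting the $e^{\lambda_n r}$ factors from both the numerator and the $(1-\alpha)$-power of the denominator — and noting $e^{\lambda_n r \alpha} \cdot e^{\lambda_n r(1-\alpha)} = e^{\lambda_n r}$ — yields \eqref{dirichlet}; the Neumann case \eqref{neumann} follows the same way from Theorem~\ref{result2}. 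The only genuinely delicate points, none of them serious, are: checking that $\widetilde\Omega$ genuinely has $W^{2,\infty}$ boundary near $\partial\Omega\times(-1,1)$ and that the metric $\widetilde g$ stays Lipschitz (both are immediate from the product structure); ensuring the set $\widetilde\omega$ used has positive $d$-dimensional measure with a lower bound expressible through $m_0$ (handled by thickening in $t$); and keeping track of the fact that the constants $C,\alpha$ produced now depend on $d+1$-dimensional data but still only through $r$ and $m_0$ as claimed. I expect the main ``obstacle,'' such as it is, to be purely bookkeeping: making the dimension shift and the thickening of $\omega$ interact cleanly with the hypotheses of the two cited theorems.
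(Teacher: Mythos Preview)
Your proposal is correct and follows essentially the same route as the paper: lift $e_n$ to a harmonic function in one extra variable and apply Theorems~\ref{result} and~\ref{result2} in dimension $d+1$. The paper uses $u_n(t,x)=e^{\lambda_n t}e_n(x)$ on $\xR_t\times\Omega$ with $\widetilde\omega=(-r,r)\times\omega$, while you use $\cosh(\lambda_n t)$ on $\Omega\times(-1,1)$ and a $t$-thickened $\omega$; these are cosmetic variants of the same argument, and your care about the $d$-dimensional measure of $\widetilde\omega$ and the bookkeeping of the $e^{\lambda_n r}$ factors is exactly what is needed.
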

\begin{theo}[Global high frequency estimate]\label{global}
Let $\Omega$ be a $W^{2, \infty}$ domain with boundary, $\omega \subset\partial\Omega$ of positive (d-1)-Lebesque measure. Then there exists $C>0$ such that for any eigenfunction $e$  of the Laplace operator with either Dirichlet or Neumann boundary condition associated to the eigenvalue $-\lambda^2, \, (\lambda \geq 1)$, we have,
\begin{equation} 
\| e \|_{L^2(\Omega)} \leq C e^{C  \lambda } (\|  e \Vert_{L^1( \omega)} +   \Vert \vert \nabla e \|_{L^1( \omega)}).  
 \end{equation}
\end{theo}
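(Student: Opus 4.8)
The plan is to feed the \emph{local} estimate of Theorem~\ref{result3} into two classical \emph{global} facts about eigenfunctions of $\Delta_g$, each valid with a loss of a factor $e^{C\lambda}$: interior elliptic regularity, used to dominate pointwise norms of $e$ and $\nabla e$ by $\|e\|_{L^2(\Omega)}$, and an interior propagation of smallness, used to promote smallness on one fixed interior ball to smallness on all of $\Omega$. We may assume $e\not\equiv0$, and also that
$$\delta\;:=\;\|e\|_{L^1(\omega)}+\|\nabla e\|_{L^1(\omega)}\;>\;0,$$
for if $\delta=0$ then $e$ and $\nabla e$ vanish a.e.\ on the positive measure set $\omega$, and applying Theorem~\ref{result} (Dirichlet case) or Theorem~\ref{result2} (Neumann case) with vanishing right-hand side on a ball $D_r(Q)$ for which $|\omega\cap\Gamma_r(Q)|_{d-1}>0$ gives $e\equiv0$ near a boundary point, whence $e\equiv0$ on the connected set $\Omega$ by interior unique continuation.

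First I would localise on the boundary. By the Lebesgue density theorem for the $(d-1)$-dimensional surface measure of $\partial\Omega$, there is a point $Q\in\partial\Omega$ with $|\omega\cap\Gamma_\rho(Q)|_{d-1}\ge\tfrac12|\Gamma_\rho(Q)|_{d-1}$ for all sufficiently small $\rho>0$. Fix such a radius $r$, small enough to be admissible in Theorem~\ref{result3} at $Q$, and set $m_0:=\tfrac12|\Gamma_r(Q)|_{d-1}>0$, so that $|\omega\cap\Gamma_r(Q)|_{d-1}\ge m_0$. A Chebyshev inequality then produces a subset $\omega'\subset\omega\cap\Gamma_r(Q)$ with $|\omega'|_{d-1}\ge m_0/2$ on which $|e|+|\nabla e|\le 2\delta/m_0$; in particular $\sup_{\omega'}|e|$, $\sup_{\omega'}|\nabla e|$, and $\sup_{\omega'}|\partial_\nu e|$ are all $\le 2\delta/m_0$. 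Since the eigenfunction satisfies its boundary condition on \emph{all} of $\Gamma_r(Q)$ and $\omega'\subset\Gamma_r(Q)$ has measure $\ge m_0/2$, Theorem~\ref{result3} applies (with $m_0/2$ playing the role of its $m_0$) and yields, with $C,\alpha$ depending only on $\Omega$ and $\omega$,
$$\sup_{D_{r/2}(Q)}(|e|+|\nabla e|)\;\le\;Ce^{\lambda r}\Big(\tfrac{2\delta}{m_0}\Big)^{\alpha}\Big(\lambda\sup_{D_r(Q)}|e|+\sup_{D_r(Q)}|\nabla e|\Big)^{1-\alpha}$$
in the Dirichlet case, and the same with $\sup_{D_{r/2}(Q)}|\nabla e|$ on the left and $\big((1+\lambda)\,2\delta/m_0\big)^{\alpha}$ replacing $\big(2\delta/m_0\big)^{\alpha}$ on the right in the Neumann case.

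It remains to trade the $\sup$-quantities for $\|e\|_{L^2(\Omega)}$. On one hand, De Giorgi--Nash--Moser and Schauder-type estimates for the divergence-form operator $\Delta_g$ with Lipschitz $g,\kappa$, applied interiorly and up to the $W^{2,\infty}$ boundary with the given homogeneous Dirichlet or Neumann condition, together with Sobolev embedding, give $\|e\|_{W^{1,\infty}(\overline\Omega)}\le e^{C\lambda}\|e\|_{L^2(\Omega)}$ for $\lambda\ge1$ (the constants are polynomial in $\lambda$, hence absorbed into $e^{C\lambda}$), so the last factor above is $\le e^{C\lambda}\|e\|_{L^2(\Omega)}$. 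On the other hand, pick a fixed interior ball $B\subset D_{r/2}(Q)$, available by the interior ball condition of a $W^{2,\infty}$ domain; using the equation $-\Delta_g e=\lambda^2e$ with $\lambda\ge1$ — and, in the Neumann case, a Poincar\'e inequality on $B$ together with the equation to recover $\|e\|_{L^2(B)}$ from $\|\nabla e\|_{L^2(B)}$ — one gets $\|e\|_{L^2(B)}\lesssim\sup_{D_{r/2}(Q)}(|e|+|\nabla e|)$, respectively $\|e\|_{L^2(B)}\lesssim\sup_{D_{r/2}(Q)}|\nabla e|$. Finally, an interior propagation of smallness for eigenfunctions of $\Delta_g$ — the interior counterpart of Theorem~\ref{result}, again with loss $e^{C\lambda}$, obtained by chaining a three-balls inequality along $\Omega$ — gives $\|e\|_{L^2(\Omega)}\le e^{C\lambda}\|e\|_{L^2(B)}$. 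Assembling these bounds, in either case
$$\|e\|_{L^2(\Omega)}\;\le\;Ce^{C\lambda}\,\delta^{\alpha}\,\|e\|_{L^2(\Omega)}^{1-\alpha};$$
dividing by $\|e\|_{L^2(\Omega)}^{1-\alpha}>0$ and raising to the power $1/\alpha$ gives $\|e\|_{L^2(\Omega)}\le C'e^{C'\lambda}\delta$, which is the claim.

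The main obstacle is this last paragraph rather than the invocation of Theorem~\ref{result3}: one needs all the relevant norms of $e$ and $\nabla e$ — on interior balls, on the boundary piece $D_{r/2}(Q)$, and the global $L^2$ norm — to be comparable up to factors $e^{C\lambda}$. The comparison with $\|e\|_{L^2(\Omega)}$ via elliptic regularity is routine once one checks that the relevant interior and boundary estimates are available for Lipschitz coefficients with polynomial-in-$\lambda$ constants; but the interior propagation of smallness, which quantifies unique continuation for eigenfunctions of a Lipschitz-metric Laplacian with only an exponential loss in $\lambda$, is the genuine analytic input, and in this low-regularity setting it rests on the Logunov--Malinnikova results (equivalently, on Carleman estimates valid for $W^{1,\infty}$ metrics). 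A minor secondary point is the quantifier bookkeeping in the localisation step, which uses that the admissible range of radii in Theorems~\ref{result}--\ref{result3} is not shrunk by lowering the measure threshold $m_0$.
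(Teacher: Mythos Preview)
Your argument is correct and follows the same architecture as the paper: localise $\omega$ near a boundary point, apply the local estimate of Theorem~\ref{result3}, control $\sup_{D_r}(|e|+|\nabla e|)$ by $(1+\lambda)^\sigma\|e\|_{L^2(\Omega)}$ via elliptic regularity, propagate smallness from a small ball near the boundary to all of $\Omega$ with loss $e^{C\lambda}$, and use a Chebyshev argument to pass from $\sup_\omega$ to $L^1(\omega)$. The differences are cosmetic: you run Chebyshev \emph{before} invoking Theorem~\ref{result3} (extracting a good subset $\omega'$ on which $|e|+|\nabla e|$ is small), while the paper first derives the sup-norm inequality $\|e\|_{L^2(\Omega)}\le K e^{M\lambda}(\sup_\omega|e|+\sup_\omega|\nabla e|)$ uniformly in $\omega$ and only then converts to $L^1$; you localise via the Lebesgue density theorem, the paper via a finite cover and pigeonhole; and for the two global analytic inputs you identify as the crux, the paper cites them explicitly --- Jerison--Lebeau~\cite{JeLe99} (transferred to the boundary case via the double-manifold construction of~\cite{BM}) for the propagation $\|e\|_{L^2(\Omega)}\le Ce^{D\lambda}\|e\|_{L^2(D_{r/2}(Q))}$, and \cite[Proposition~2.1]{BM} for the Sobolev bound $\|e\|_{W^{1,\infty}}\le C(1+\lambda^\sigma)\|e\|_{L^2}$ --- rather than appealing to general three-balls or De~Giorgi--Nash--Moser statements. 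Your extra care in the Neumann case (recovering $\|e\|_{L^2(B)}$ from $\sup|\nabla e|$ via Poincar\'e plus the eigenvalue equation) is a point the paper leaves implicit.
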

\begin{rema} At least for Dirichlet boundary conditions, it would be possible to relax the smoothness assumption of $\partial \Omega$ to Lipschitz with an additional  star-shaped assumption allowing to employ Carleman boundary estimates (see~\cite{LR95}), while still assuming that the regularity in a neighborhood of $\omega$ is $W^{2, \infty}$ (see e.g.~\cite{AEWZ} for similar results in the context of the heat equation, with analytic additional assumption near~$\omega$). For simplicity we do not pursue this track in this note
\end{rema}
 \section{  Proofs}

We shall use the following result  proved in \cite{BM}.  
\begin{prop}[\cite{BM}  Proposition 3.3]\label{propBM}
Assume that $\Omega$ is a domain in $\xR^d$ with $W^{2,\infty}$ boundary. Let $g= (g_{jk})$ be a Lipschitz Riemannian metric and $\kappa$ be a positive Lipschitz density in $\Omega.$ Denoting by $g^{-1} = (g^{ij})$ the inverse matrix of $g$ we set,
$$\Delta_g = \frac{1}{\kappa(x)} \sum_{j,k=1}^d \partial_j\big(\kappa(x) g^{jk}(x)\partial_k).$$
Then near any point $Q \in \partial \Omega$ there exists a $W^{2, \infty}$ diffeomorphism which sends $Q$ to the origin and transforms  $D_r(Q), \Gamma_r(Q)$ for small $r>0$, $\omega \subset \Gamma_r(Q)$ and $\Delta_g$ to,
\begin{align*}
 &\widetilde{D} = \{(x',x_d): \vert x'\vert < \delta', x_d \in (0, \eps)\}, \quad  \widetilde{\Gamma} =  \{(x',x_d): \vert x'\vert < \delta', x_d=0\},\\
 & \widetilde{\omega} \subset \widetilde{\Gamma}, \quad \text{measure}( \widetilde{\omega})  >0,\\
 & \Delta_{\widetilde{g}} = \frac{1}{\widetilde{\kappa}(x',x_d) } \text{div}_{(x'x_d)}\,\big(\widetilde{\kappa}(x',x_d) \widetilde{g}(x',x_d)\nabla_{(x'x_d)}\big),\\
 &\widetilde{g}(x',0)= 
 \begin{pmatrix}
 \widetilde{g}_0(x')& 0\\
 0& 1
 \end{pmatrix}
 \end{align*}
  where $\widetilde{g}$ is a Lipschitz metric in $\widetilde{D}$ and $\widetilde{g}_0(x')$ is a $(d-1)\times (d-1)$ symmetric matrix.
\end{prop}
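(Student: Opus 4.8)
\emph{Proof plan.} I would produce the chart in one stroke, generating the transverse coordinate from a mollification of the inward $g$-unit normal to $\partial\Omega$ taken at the scale of the distance to the boundary; this simultaneously flattens $\partial\Omega$ and, in the limit at the boundary, puts the metric in boundary-normal form. Concretely: near $Q$ fix a $W^{2,\infty}$ parametrization $p\colon U\to\partial\Omega$, $U$ a neighbourhood of $0$ in $\xR^{d-1}$, $p(0)=Q$, and let $\nu\colon U\to\xR^{d}$ be the inward $g$-unit normal to $\partial\Omega$ along $p$; since $g$ and $Dp$ are Lipschitz, $\nu$ is Lipschitz and transverse to $\mathrm{span}\{\partial_{y_1}p,\dots,\partial_{y_{d-1}}p\}$, and I extend it Lipschitzly near $0$. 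With a mollifier $\rho$ on $\xR^{d-1}$, $\rho_t(z)=t^{-(d-1)}\rho(z/t)$ and $\nu^{(t)}=\nu*\rho_t$, set
\[
\Phi(y',y_d)=p(y')+y_d\,\nu^{(y_d)}(y'),\qquad (y',y_d)\in U'\times(0,\eps),
\]
which for $U',\eps$ small is a homeomorphism onto a neighbourhood of $Q$ in $\overline{\Omega}$, sending $\{y_d=0\}$ into $\partial\Omega$, $\{y_d>0\}$ into $\Omega$, and $0$ to $Q$; the desired diffeomorphism is $\Psi:=\Phi^{-1}$.

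\emph{The hard part: $\Phi\in W^{2,\infty}$.} The whole difficulty is to show that $\Phi$ is $W^{2,\infty}$ and not merely bi-Lipschitz --- indeed the naive choice $\Phi=p(y')+y_d\,\nu(y')$ is only Lipschitz, since $\nu$ is. I would establish the elementary ``mollification-scale'' estimates, valid for any Lipschitz $f$ on $\xR^{d-1}$:
\[
\|\nabla_{y'}f^{(t)}\|_{L^\infty}+\|\partial_t f^{(t)}\|_{L^\infty}\lesssim\|\nabla f\|_{L^\infty},\qquad
\|\nabla_{y'}^2 f^{(t)}\|_{L^\infty}+\|\nabla_{y'}\partial_t f^{(t)}\|_{L^\infty}+\|\partial_t^2 f^{(t)}\|_{L^\infty}\lesssim t^{-1}\|\nabla f\|_{L^\infty},
\]
the first line because mollification does not increase the $W^{1,\infty}$ norm and, for the $t$-derivatives, $\partial_t\rho_t=-t^{-1}\mathrm{div}_z(z\rho_t)$; the second by throwing exactly one derivative onto $f$ and the remaining ones onto $\rho_t$ (or onto the scale). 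Applying these with $f=\nu$, $t=y_d$, every second derivative of $(y',y_d)\mapsto y_d\,\nu^{(y_d)}(y')$ that could blow up carries a compensating factor $y_d$ cancelling the $t^{-1}=y_d^{-1}$ (e.g.\ $\partial^2_{y'y'}(y_d\,\nu^{(y_d)})=y_d\,\partial^2_{y'y'}\nu^{(y_d)}=O(1)$, and likewise for the mixed and pure $y_d$ second derivatives); hence $\Phi\in W^{2,\infty}$ with $D\Phi$ Lipschitz. Since $D\Phi(0)=[\,Dp(0)\mid\nu(0)\,]$ is invertible, $\Phi$ is a $W^{2,\infty}$ diffeomorphism onto its image and $\Psi$ is $W^{2,\infty}$ and bi-Lipschitz.

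\emph{Conclusion.} For $r$ small, $\Psi$ carries $D_r(Q),\Gamma_r(Q),\omega$ to sets of the stated form, with $|\widetilde\omega|_{d-1}>0$ since $\Psi$ is bi-Lipschitz. Being a $C^1$ diffeomorphism, under $\Psi$ the operator $\Delta_g=\frac1\kappa\partial_j(\kappa g^{jk}\partial_k)$ becomes (inspect the associated Dirichlet form) $\frac1{\widetilde\kappa}\,\mathrm{div}(\widetilde\kappa\,\widetilde g\,\nabla)$ with $\widetilde g$ the inverse of the pulled-back metric $\Phi^{*}g$ and $\widetilde\kappa=(\kappa\circ\Phi)\,|\det D\Phi|$; both are Lipschitz because $D\Phi$, $g\circ\Phi$, $\kappa\circ\Phi$ are Lipschitz and $|\det D\Phi|$ is bounded below. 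Finally, on $\{y_d=0\}$ one has $\partial_{y_d}\Phi(y',0)=\nu(y')$ (the $y_d\,\partial_{y_d}\nu^{(y_d)}$-term vanishes as $y_d\to0$) and $\partial_{y_i}\Phi(y',0)=\partial_{y_i}p(y')$, so the pulled-back metric at $y_d=0$ equals $\mathrm{diag}\big(g(\partial_{y_i}p,\partial_{y_j}p),\,g(\nu,\nu)\big)=\mathrm{diag}(\widetilde g_0(y'),1)$ with $\widetilde g_0$ Lipschitz, symmetric, positive definite; hence so does $\widetilde g(y',0)$, which is exactly the asserted block structure. Thus the single genuine obstacle is the $W^{2,\infty}$ regularity of the chart, and it is precisely the mollification of $\nu$ at scale $y_d$ that repairs the second derivatives while still recovering the true boundary-normal frame in the limit $y_d\to0$.
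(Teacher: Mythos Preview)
The paper does not prove this proposition: it is quoted verbatim from \cite{BM} (Proposition~3.3 there) and used as a black box. So there is no ``paper's own proof'' to compare against.

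That said, your argument is correct and pinpoints the one genuine obstruction. The naive chart $\Phi_0(y',y_d)=p(y')+y_d\,\nu(y')$ only yields $D\Phi_0$ of class $L^\infty$ (not Lipschitz), because $\nu$ is merely Lipschitz; replacing $\nu$ by its mollification at scale $y_d$ is exactly the right fix. Your key estimates
\[
\|\nabla_{y'}^2 f^{(t)}\|_{L^\infty}+\|\nabla_{y'}\partial_t f^{(t)}\|_{L^\infty}+\|\partial_t^2 f^{(t)}\|_{L^\infty}\lesssim t^{-1}\|\nabla f\|_{L^\infty}
\]
follow from the identity $\partial_t\rho_t=-t^{-1}\mathrm{div}_z(z\rho_t)$ (so every $t$-derivative costs $t^{-1}$ but produces a divergence that can be integrated by parts onto $f$), and every potentially singular second derivative of $y_d\,\nu^{(y_d)}$ carries a factor $y_d$ cancelling the $t^{-1}=y_d^{-1}$. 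The check that $\partial_{y_d}\Phi(y',0)=\nu(y')$ (hence the block form of $\widetilde g(y',0)$) is straightforward since $\nu^{(y_d)}\to\nu$ and $y_d\,\partial_t\nu^{(t)}|_{t=y_d}=O(y_d)\to 0$.

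For comparison, the construction in \cite{BM} proceeds in two steps rather than one: first flatten $\partial\Omega$ by a $W^{2,\infty}$ graph diffeomorphism (no regularity issue there), and then perform a second change of variables of the form $(x',x_d)\mapsto(x'+x_d\,\beta(x'),\,x_d\,\gamma(x'))$ to kill the off-diagonal blocks of the metric at $x_d=0$; since $\beta,\gamma$ are only Lipschitz, this second map is again only $C^{0,1}$ in the naive version, and a mollification at scale $x_d$ (the same trick you use) is needed. Your one-shot construction is a clean packaging of the same idea.
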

\begin{proof}[Proof of Theorem \ref{result}]
If $u$ is a solution of $\Delta_g u = 0$ in $D_r(Q)$ then $\widetilde{u},$  its transform by the diffeomorphism is a solution of $\Delta_{\widetilde{g}} \widetilde{u}= 0$ in $\widetilde{D}.$ By the classical regularity theorem for elliptic equations we know that $\widetilde{u}$ is $W^{2,\infty}$ in $\widetilde{D}.$  Moreover recall that by hypothesis we have $\widetilde{u}(x',0)=0.$

Let us set, for $(x',x_d)$ such   that $\vert x'\vert < \delta', x_d \in (-\eps, \eps)$,
\begin{equation}\label{ext}
v(x',x_d) =
 \left\{
 \begin{array}{rl}
\widetilde{u}(x',x_d), & \quad \text{if } x_d \in (0, \eps), \\ -\widetilde{u}(x',-x_d),& \quad \text{if }  x_d \in (-\eps,0).
\end{array}
\right.
\end{equation}
It is easy to see that $v \in W^{2,\infty}(\mathcal{O})$ where, 
$$\mathcal{O} = \{(x',x_d): \vert x'\vert < \delta', x_d \in (-\eps, \eps)\}.$$
Let us set, for $(x',x_d)\in \mathcal{O}$, 
\begin{equation}\label{rho}
\rho(x',x_d)=
\left\{
\begin{array}{rl}
\widetilde{\kappa}(x', x_d), &\quad \text{if } x_d \in (0, \eps),\\ \widetilde{\kappa}(x', -x_d),&\quad \text{if } x_d \in (-\eps,0).
\end{array}
\right.
\end{equation}
Moreover for $1 \leq j,k \leq d-1$ or $j=k=d$ let us set,
\begin{equation}\label{ht}
h_{jk}(x',x_d)=
\left\{
\begin{array}{rl}
\widetilde{g}_{jk}(x', x_d), &\quad \text{if } x_d \in (0, \eps) ,\\ \widetilde{g}_{jk}(x', -x_d),&\quad \text{if } x_d\in (-\eps,0),
\end{array}
\right.
\end{equation}
Now we set, for $1 \leq j \leq d-1$,
\begin{equation}\label{hmixte}
h_{jd}(x',x_d)   =
\left\{
\begin{array}{rl}
 \widetilde{g}_{jd}(x', x_d)  \quad &\text{if } x_d \in (0, \eps) ,\\  -\widetilde{g}_{jd}(x', -x_d)  \quad &\text{if } x_d\in (-\eps,0),
\end{array}
\right.
\end{equation}
and the same formula for $h_{dj}, 1 \leq j \leq d-1.$
Notice that the function $\rho$ and $h_{jk}, 1 \leq j,k\leq d$ are Lipschitz in $\mathcal{O}.$ This is obvious for $\rho$ and $h_{jk}$  for $1 \leq j,k \leq d-1$ or $j=k=d.$ For $h_{jd}$ and $h_{dj}$ this follows from the fact that according to Proposition \ref{propBM} we have, $\widetilde{g}_{jd}(x',0) = \widetilde{g}_{dj}(x',0) =0.$

Eventually we set,
$$ P= \frac{1}{\rho(x',x_d)} \text{div}_{(x',x_d)}  \big( \rho(x',x_d) h(x',x_d)\nabla_{(x',x_d)}\big).$$
Notice that the operator $P$ is still elliptic in $\mathcal{O}$ since, if we denote by $\sigma_2(P)$ its principal symbol  we have,
\begin{equation*}
 \sigma_2(P)(x,\xi) =
 \left\{
 \begin{array}{rl}
   \sigma_2(\Delta_{\widetilde{g}})(x',x_d, \xi',\xi_d) \quad &\text{if } x_d \in(0, \eps),\\ \sigma_2(\Delta_{\widetilde{g}})(x',-x_d, \xi',-\xi_d) \quad &\text{if } x_d \in (-\eps,0).   
   \end{array}
   \right.
\end{equation*}
\begin{lemm}
We have, 
$$ Pv = 0 \text{ in } \mathcal{O}.$$
\end{lemm}
\begin{proof}
We have,
\begin{align*}
\rho P  &= P_1 +P_2 +P_3 \quad \text{where},\\
P_1&= \sum_{j,k=1}^{d-1} \partial_j(\rho h_{jk}\partial_k),\quad P_2 = \sum_{j=1}^{d-1}\big[ \partial_j(\rho h_{jd}\partial_d) +   \partial_d(\rho h_{dj}\partial_j)\big], \quad P_3 =   \partial_d(\rho h_{dd}\partial_d). 
 \end{align*}
Since $P_1$ has only tangential  derivatives we have,
\begin{equation}\label{P1}
P_1 v =
\left\{
\begin{array}{rl}
 \sum_{j,k=1}^{d-1} \big[\partial_j(\rho \widetilde{g}_{jk}\partial_k)\widetilde{u}\big](x',x_d) &\text{if } x_d \in (0, \eps),\\ - \sum_{j,k=1}^{d-1} \big[\partial_j(\rho\widetilde{g}_{jk}\partial_k)\widetilde{u}\big](x',-x_d) &\text{if } x_d \in (-\eps,0).
\end{array}
\right.
\end{equation}
Now according to \eqref{hmixte} we have, for $1 \leq j \leq d-1$, 

\begin{equation}\label{ext2}
\partial_j(\rho h_{jd}\partial_d v)(x',x_d)) =
 \left\{
 \begin{array}{rl}
\partial_j(\widetilde{\kappa} \widetilde{g}_{jd}\partial_d \widetilde{u})(x',x_d), & \quad \text{if } x_d \in (0, \eps), \\   - \partial_j(\widetilde{\kappa} \widetilde{g}_{jd}\partial_d \widetilde{u})(x',-x_d) &\quad \text{if }  x_d \in (-\eps,0),
\end{array}
\right.
\end{equation}
and an analogue formula for the term involving $h_{dj}$.

Eventually according to \eqref{ht} we have,
\begin{equation}\label{P3}
P_3 v =
\left\{
\begin{array}{rl}
  \big[\partial_d(\rho \widetilde{g}_{dd}\partial_d)\widetilde{u}\big](x',x_d) &\text{if } x_d \in (0, \eps),\\    -\big[\partial_d(\rho\widetilde{g}_{dd}\partial_d)\widetilde{u}\big](x',-x_d) &\text{if } x_d \in (-\eps,0).
\end{array}
\right.
\end{equation}
It follows from  \eqref{P1}, \eqref{ext} and \eqref{P3} that,
\begin{equation*}
\rho Pv =
\left\{
\begin{array}{rl}
\Delta_{\widetilde{g}} \widetilde{u}(x',x_d)\quad & \text{if } x_d \in (0, \eps),\\ -\Delta_{\widetilde{g}} \widetilde{u}(x',-x_d)\quad & \text{if } x_d \in (-\eps,0),
\end{array}
\right.
\end{equation*}
thus $Pv =0$ as claimed.
\end{proof}
Now we are in position to apply \cite[Theorem 5.1]{LM2}. Indeed $P$ is an elliptic operator in divergence form with Lipschitz coefficients in the set $\mathcal{O}$, $\widetilde{\omega}$ is a subset of $\mathcal{O}$ with strictly positive $(d-1)$ Lebesgue measure and $v$ is a solution of $Pv = 0$ in $\mathcal{O}.$  By this Theorem we can infer that, for every compact in $\mathcal{O}$  there exist  $C>0$ and $\alpha \in (0,1)$ independent of $v$ such that,
$$\sup_{K}\vert \nabla v \vert \leq C \big(\sup_{\widetilde{\omega}}\vert \nabla v \vert\big)^\alpha \big(\sup_{\mathcal{O}}\vert \nabla v \vert\big)^{1-\alpha}.$$
Now since by hypothesis $v=0$ on $\mathcal{O} \cap \{x_d=0\}$ the tangential derivatives of $v$ vanish on this set, so we are left with the normal derivative of $v$ on $\widetilde{\omega}.$ Moreover  an elementary Poincare inequality shows that we can estimate the $L^\infty$ norm  of $v$ by that of $\nabla v.$ Restricting ourselves to $x_d >0$ and going back to $u$ by the diffeomorphism we obtain the result in Theorem \ref{result}.
 \end{proof}
 \begin{proof}[Proof of Theorem \ref{result2}]
The proof of Theorem \ref{result2} is completely analogous. Instead of \eqref{ext} we have just to set,
  for $(x',x_d)$ such that $\vert x'\vert < \delta', x_d \in (-\eps, \eps)$,
\begin{equation}\label{ext3}
v(x',x_d) =
 \left\{
 \begin{array}{rl}
\widetilde{u}(x',x_d), & \quad \text{if } x_d \in (0, \eps), \\ \widetilde{u}(x',-x_d),& \quad \text{if }  x_d \in (-\eps,0).
\end{array}
\right.
\end{equation}
\end{proof}
 \begin{proof}[Proof of Theorem \ref{result3}]
 We shall apply Theorem \ref{result} in the following context. Let us set $\widetilde{\Omega} = \xR_t \times \Omega$ which is $(d+1)$ dimensional. Then $\partial \widetilde{\Omega} = \xR_t \times \partial \Omega.$ Let $ \widetilde{Q} = (0, Q), Q \in \partial\Omega, \widetilde{B}_r = \{(t,x): \vert t \vert + \vert x \vert<r\}, \widetilde{D}_r= \widetilde{B}_r\cap \widetilde{\Omega},
 \widetilde{\omega} = (-r,r)\times \omega.$ Then $ \widetilde{\omega}$ is a subset of $\partial \widetilde{\Omega}$  with positive  $d$-Lebesgue measure.
 
 We consider first the Dirichlet case. Consider the function $u_n(t,x) = e^{\lambda_n t} e_n(x)$ which solves the  elliptic equation $(\partial_t^2 + \Delta_g)u_n = 0$  in $\widetilde{\Omega}.$ Then we apply Theorem \ref{result}. Since $\nabla_x e_n = \nabla_x u_n(0,x)$ and the normal derivative to $\partial \widetilde{\Omega}$ is still  $\partial_\nu$ we obtain,  
\begin{align*} 
  \sup_{D_\frac{r}{2}} \vert\nabla_x e_n\vert   &\leq \sup_{\widetilde{D}_\frac{r}{2}} \vert\nabla_{x} u_n\vert\\
  & \leq C\big( \lambda_n e^{\lambda_n r}  \sup_{D_r}\vert  e_n\vert +  e^{\lambda_n r}\sup_{D_r}\vert \nabla_x e_n\vert\big)^{1-\alpha} \big( e^{\lambda_n r}   \sup_{\omega}\vert \partial_\nu e_n\vert\big)^\alpha,\\
  & \leq C e^{\lambda_n r}  \big( \lambda_n\sup_{D_r}\vert  e_n\vert + \sup_{D_r} \vert \nabla_x e_n\vert\big)^{1-\alpha} \big(    \sup_{\omega}\vert\partial_\nu e_n\vert\big)^\alpha.
   \end{align*}
   The estimate in the Neumann case is completely analogous. 
  \end{proof}
 
\begin{proof}{Proof of Theorem~\ref{global}.} 
 We are going to deduce it from the local version Theorem~\ref{result3}.
  By compactness, we can cover $\partial \Omega$ by a finite number of balls 
  $$ \partial \Omega = \cup_{j=1}^N \Gamma_{r_0 (Q_j)}(Q_j), \quad \Gamma_{r_0 (Q_j)}(Q_j)= B(Q_j, r_0(Q_j))\cap \partial \Omega, \quad Q_j \in \partial \Omega.$$
 Setting $D(Q_j, r_0(Q_j)) = B(Q_j, r_0(Q_j))\cap \Omega$ we deduce 
   from~\cite[Theorem 14.6]{JeLe99}  that there exists $C,D$ depending only on $M$ such that for any $j\in \{1, \dots, N\}$, 
 \begin{equation}\label{jerison-lebeau}
  \| e \|_{L^2(\Omega)}\leq C e^{D\lambda} \| e\|_{L^2(D(Q_j, \mez r_0(Q_j)))}.
  \end{equation}
 Remark that in ~\cite{JeLe99}, \eqref{jerison-lebeau} is stated (and proved) only on compact manifolds without boundary. To prove it in our case we shall use   the following result which allows to reduce the study to the case where there is no boundary.
 

\begin{thmcite}[The double manifold~\protect{\cite[Theorem 7]{BM}}]\label{double}
Let $g$ be given. There exists a $W^{2, \infty}$ structure on the double manifold $\widetilde{M}$, a metric $\widetilde{g}$  of class $W^{1, \infty}$ on $\widetilde{M}$, and  a density $\widetilde{\kappa}$ of class $W^{1, \infty}$ on $\widetilde{M}$ such that the following holds.
\begin{itemize}
\item The maps
$$ i^\pm  x\in M  \rightarrow (x, \pm 1) \in \widetilde{M} = M \times \{\pm 1\} / \partial M$$ are isometric embeddings.
\item The density induced on each copy of $M$ is the density $\kappa$,
$$\widetilde{\kappa} \mid_{M\times\{\pm1 \}} = \kappa.
$$
\item For any eigenfunction $e$ with eigenvalue $\lambda^2$ of the Laplace operator $-\Delta = - \frac 1 {\kappa} \text{div } g^{-1} \kappa \nabla$ with Dirichlet or Neumann boundary conditions, there exists an eigenfunction $\widetilde{e}$ with the same eigenvalue $\lambda$ of the Laplace operator $-\Delta = - \frac 1 {\widetilde{\kappa} }\text{div } \widetilde{g}^{-1} \widetilde{\kappa} \nabla$ on $\widetilde{M}$ such that, 
\begin{equation}\label{ext}
\widetilde{e} \mid_{M \times \{1\}} = e, \quad \widetilde{e} \mid_{M \times \{-1\}} = \begin{cases}  -e \quad &(\text{Dirichlet boundary conditions}),\\
 e \qquad &(\text{Neumann boundary conditions}). \end{cases}
\end{equation}
\end{itemize}
\end{thmcite}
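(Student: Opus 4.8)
The plan is to construct $\widetilde{M}$, $\widetilde{g}$ and $\widetilde{\kappa}$ explicitly in boundary normal coordinates, verify the claimed regularity, and then propagate an eigenfunction across the interface by a reflection argument. First I would invoke Proposition~\ref{propBM}: near any boundary point $Q$ there is a $W^{2,\infty}$ diffeomorphism flattening the boundary and bringing the metric to the normal form $\widetilde g(x',0)=\mathrm{diag}(\widetilde g_0(x'),1)$ with $\widetilde g_{jd}(x',0)=\widetilde g_{dj}(x',0)=0$ for $j<d$, and the Laplacian to divergence form with Lipschitz coefficients. One takes a finite atlas of such boundary charts together with interior charts; the transition maps are $W^{2,\infty}$, so the doubled space $\widetilde M = (M\times\{+1\})\sqcup(M\times\{-1\})/\!\sim$, glued along $\partial M$ via the identity, carries a well-defined $W^{2,\infty}$ differentiable structure, where on a doubled boundary chart the coordinate on the ``$-1$'' side is $(x',-x_d)$. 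This is exactly the situation already used in the proof of Theorem~\ref{result}: one reflects the metric components by setting, for the doubled chart, $\widetilde g_{jk}$ and $\widetilde\kappa$ to be even in $x_d$ when $1\le j,k\le d-1$ or $j=k=d$, and odd in $x_d$ for the mixed components $\widetilde g_{jd}$, $\widetilde g_{dj}$ with $j<d$. Because $\widetilde g_{jd}$ and $\widetilde\kappa$ vanish resp.\ are smooth at $x_d=0$, these reflected coefficients are Lipschitz, i.e.\ $\widetilde g\in W^{1,\infty}(\widetilde M)$ and $\widetilde\kappa\in W^{1,\infty}(\widetilde M)$; positive-definiteness and positivity are preserved since the principal symbol on the ``$-1$'' side is $\sigma_2(\Delta_{\widetilde g})(x',-x_d,\xi',-\xi_d)$. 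The two maps $i^\pm$ are isometric embeddings by construction, and the induced density on each copy is $\kappa$.

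For the third bullet, given an eigenfunction $e$ with $-\Delta_g e=\lambda^2 e$ satisfying Dirichlet (resp.\ Neumann) conditions, I would define $\widetilde e$ on $M\times\{+1\}$ to equal $e$ and on $M\times\{-1\}$ to equal $-e$ (resp.\ $e$), read in each doubled boundary chart via the reflection $x_d\mapsto -x_d$ — precisely the extensions \eqref{ext} and \eqref{ext3} used earlier. The key point is that $\widetilde e$ solves $-\Delta_{\widetilde g}\widetilde e=\lambda^2\widetilde e$ across the interface. Away from $\partial M$ this is immediate; near $\partial M$ one repeats the computation of the Lemma in the proof of Theorem~\ref{result}, splitting $\widetilde\kappa\,\Delta_{\widetilde g}=P_1+P_2+P_3$ into purely tangential, mixed, and pure-normal parts: $P_1\widetilde e$ and $P_3\widetilde e$ are even/odd consistently with the reflection, while the mixed terms $P_2\widetilde e$ pick up the sign from the odd reflection of $\widetilde g_{jd}$, so that $\widetilde\kappa\,\Delta_{\widetilde g}\widetilde e$ equals $\pm(\widetilde\kappa\,\Delta_{\widetilde g}e)(x',\mp x_d)=\mp\lambda^2\widetilde\kappa\,e$, matching $\lambda^2\widetilde\kappa\,\widetilde e$. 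In the Dirichlet case, $e|_{\partial M}=0$ makes the odd extension $W^{2,\infty}$, hence in particular $C^1$ across the interface, so $\widetilde e$ is a genuine weak (indeed strong) solution; in the Neumann case $\partial_\nu e|_{\partial M}=0$ makes the even extension $W^{2,\infty}$. One should also check that $\widetilde e\in L^2(\widetilde M)$ and is a weak eigenfunction in the variational sense, so it genuinely belongs to the spectrum of $-\Delta_{\widetilde g}$; this follows from $e\in H^1(M)$ together with the matching of traces (Dirichlet: value; Neumann: conormal derivative) at $\partial M$, which is exactly the condition for the glued function to lie in $H^1(\widetilde M)$ and satisfy the weak equation against all test functions, including those not vanishing near the interface.

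The main obstacle I anticipate is the regularity bookkeeping at the interface: one must confirm that the reflected metric is only $W^{1,\infty}$ and not worse, which relies crucially on the normal-form identities $\widetilde g_{jd}(x',0)=0$ from Proposition~\ref{propBM} — without them the odd reflection of the mixed components would be merely $L^\infty$ with a jump, destroying the divergence-form Lipschitz structure needed for unique continuation. A secondary technical point is to ensure the $W^{2,\infty}$ differentiable structure on $\widetilde M$ is consistent: the transition functions between two overlapping doubled boundary charts must remain $W^{2,\infty}$ after the reflection $x_d\mapsto -x_d$, which holds because the original boundary-flattening diffeomorphisms are $W^{2,\infty}$ and preserve $\{x_d=0\}$, hence conjugating by the reflection keeps them $W^{2,\infty}$. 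Everything else — isometry of $i^\pm$, positivity of $\widetilde\kappa$, the eigenvalue computation — is a direct transcription of the arguments already carried out for Theorem~\ref{result}.
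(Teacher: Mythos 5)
Your construction is essentially the paper's own approach: the paper does not prove this statement (it is quoted from \cite[Theorem 7]{BM}), but its content is exactly the globalization of the local reflection argument the paper carries out in proving Theorem \ref{result} -- even/odd reflection of $\widetilde{\kappa}$, of the metric components (odd for $\widetilde g_{jd}$, which stays Lipschitz only because $\widetilde g_{jd}(x',0)=0$ from Proposition \ref{propBM}), and odd/even extension of the eigenfunction for Dirichlet/Neumann -- and your proposal reproduces this faithfully, including the correct identification of where the normal form is crucially used. One small caveat: your justification that the doubled atlas is $W^{2,\infty}$ is insufficient as stated, since a transition map merely preserving $\{x_d=0\}$ need not glue to a $C^1$ map after reflection (e.g.\ $(x',x_d)\mapsto(x'+x_d a(x'),x_d)$ preserves the interface but its doubled version has a jump in $\partial_{x_d}$ of the tangential components); one needs the normal coordinate to be chart-independent (the differential at the boundary must send the normal direction to the normal direction), which is what the collar construction behind \cite[Theorem 7]{BM} arranges and is a fixable, not structural, point.
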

On the other hand, there exists $j_0, 1\leq j_0, \leq N$ such that
 $$|\omega \cap \Gamma_{r_0 (Q_{j_0})}(Q_{j_0})\vert_{d-1} \geq \frac{|\omega|_{d-1}} {N} .$$
 For simplicity we denote now by $r_0 =r_0 ( Q_{j_0})$ and by $Q= Q_{j_0}$.
 Combining~\eqref{jerison-lebeau} with~\eqref{dirichlet} or~\eqref{neumann}, we get
 \begin{equation}\label{eq12}
  \| e\| _{L^2(\Omega)} \leq Ce^{\lambda (r_0+D)} \big( \lambda \sup_{\omega\cap D_{r_0}(Q)} \vert e\vert+ \sup_{\omega\cap D_{r_0}(Q)} \vert\nabla e\vert\big)^\alpha \big(\lambda \sup_{D_{r_0}(Q)} \vert e\vert  + \sup_{D_{r_0} (Q)}    \vert \nabla e \vert\big)^{1-\alpha}.
  \end{equation}
To eliminate the right hand side  in the  above inequality, we now use~\cite[Proposition 2.1]{BM}
\begin{prop}\label{sobolev}There exists $\sigma>0$ such that with, 
$$ \mathcal{H}^\sigma= D( (- \Delta)^{\sigma/2})$$ endowed with its natural norm, 
$$\| u\|_{\mathcal{H}^\sigma}= \Bigl(\sum_{k} |u_k |^2 (1+ \lambda_k)^{2\sigma} \Bigr)^{1/2} ,
$$
we have,
$$\| \nabla_x u \|_{L^\infty}+  \| u \|_{L^\infty} \leq C \| u \|_{\mathcal{H}^\sigma},$$
where $\mathcal{H}^\sigma$ is the domain of the operator $(-\Delta )^{\frac 1 2 } $ and consequently, 
$$ \|u\|_{\mathcal{H}^\sigma} = \|((- \Delta)^\sigma+ \text{Id} ) u \|_{L^2}.$$
\end{prop}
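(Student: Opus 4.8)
The plan is to finish the proof of Theorem~\ref{global}; the only missing ingredient is Proposition~\ref{sobolev}, which I would establish first. Since the definition gives $\|u\|_{\mathcal{H}^\sigma}\simeq\|(1-\Delta_g)^{\sigma/2}u\|_{L^2}$ (the operator $1-\Delta_g$ having eigenvalues $1+\lambda_k^2\simeq(1+\lambda_k)^2$), it suffices to show that $(1-\Delta_g)^{-\sigma/2}$ maps $L^2(\Omega)$ boundedly into $W^{1,\infty}(\Omega)$ for $\sigma$ large. For the $L^\infty$ part I would use the subordination formula $(1-\Delta_g)^{-\sigma/2}=\tfrac{1}{\Gamma(\sigma/2)}\int_0^\infty t^{\sigma/2-1}e^{-t}\,e^{t\Delta_g}\,dt$, which reduces the claim to the ultracontractive bound $\|e^{t\Delta_g}\|_{L^2\to L^\infty}\lesssim\min(t^{-d/4},1)$: then $\int_0^\infty t^{\sigma/2-1}e^{-t}\,\|e^{t\Delta_g}\|_{L^2\to L^\infty}\,dt<\infty$ as soon as $\sigma>d/2$. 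The ultracontractive estimate itself comes from Nash's argument applied to the Dirichlet form of $\Delta_g$, which is equivalent to $\|\nabla\cdot\|_{L^2}^2$ (uniform ellipticity of $g$, boundedness of $\kappa$), together with the Sobolev inequality on $H^1_0(\Omega)$ in the Dirichlet case and on $H^1(\Omega)$ — using that a $W^{2,\infty}$ domain is an extension domain — in the Neumann case; no regularity of the coefficients is needed at this stage.

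For the gradient I would avoid gradient heat‑kernel bounds and use elliptic regularity instead. As $g$ and $\kappa$ are Lipschitz, hence of vanishing mean oscillation, and $\partial\Omega$ is $W^{2,\infty}$, the global Calderón–Zygmund estimate up to the boundary for Dirichlet or Neumann conditions gives $\|v\|_{W^{2,p}(\Omega)}\lesssim_p\|v\|_{L^p(\Omega)}+\|\Delta_g v\|_{L^p(\Omega)}$ for every $p<\infty$; taking $p$ large and using $W^{2,p}\hookrightarrow W^{1,\infty}(\Omega)$ yields $\|\nabla u\|_{L^\infty}\lesssim\|u\|_{L^\infty}+\|\Delta_g u\|_{L^\infty}$. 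Applying the $L^\infty$ bound already proved once to $u$ and once to $\Delta_g u=u-(1-\Delta_g)u$ gives $\|\Delta_g u\|_{L^\infty}\lesssim\|\Delta_g u\|_{\mathcal{H}^\sigma}\lesssim\|u\|_{\mathcal{H}^{\sigma+2}}$, so $\|u\|_{L^\infty}+\|\nabla u\|_{L^\infty}\lesssim\|u\|_{\mathcal{H}^{\sigma+2}}$ and any $\sigma>d/2+2$ does the job. (One could instead stay in the semigroup picture and invoke $\|\nabla e^{t\Delta_g}\|_{L^2\to L^\infty}\lesssim t^{-d/4-1/2}$, valid on the doubled closed manifold $\widetilde M$ of Theorem~\ref{double} because its metric is $W^{1,\infty}$, but the elliptic route above is cleaner.)

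With Proposition~\ref{sobolev} in hand I would plug it into~\eqref{eq12}. Since $e$ is an eigenfunction with eigenvalue $\lambda^2$ one has $\|e\|_{\mathcal{H}^\sigma}=(1+\lambda)^\sigma\|e\|_{L^2(\Omega)}$, so the last factor of~\eqref{eq12} is $\lesssim(1+\lambda)^{\sigma+1}\|e\|_{L^2(\Omega)}$; raising it to the power $1-\alpha$, moving it to the left‑hand side, dividing by $\alpha$ and absorbing the resulting polynomial $(1+\lambda)^{(\sigma+1)(1-\alpha)/\alpha}$ into $e^{C\lambda}$ (for $\lambda\ge1$) produces a bound with only $\sup_{\omega\cap\Gamma_{r_0}(Q)}|e|$ and $\sup_{\omega\cap\Gamma_{r_0}(Q)}|\nabla e|$ on the right. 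To convert these suprema into $L^1(\omega)$ norms I would apply Theorem~\ref{result3} not to all of $\omega\cap\Gamma_{r_0}(Q)$ but to the subset $\omega'=\{x\in\omega\cap\Gamma_{r_0}(Q):|e(x)|+|\nabla e(x)|\le 2\,|\omega\cap\Gamma_{r_0}(Q)|_{d-1}^{-1}(\|e\|_{L^1(\omega)}+\|\nabla e\|_{L^1(\omega)})\}$, which by Markov's inequality has measure at least $\tfrac12|\omega\cap\Gamma_{r_0}(Q)|_{d-1}\ge\tfrac{1}{2N}|\omega|_{d-1}=:m_0>0$, so Theorem~\ref{result3} is applicable with this $m_0$ and on $\omega'$ the suprema are controlled by $|\omega|_{d-1}^{-1}(\|e\|_{L^1(\omega)}+\|\nabla e\|_{L^1(\omega)})$; this yields the asserted estimate.

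The main obstacle, as I see it, is less in these computations than in the low‑regularity/boundary bookkeeping: both the Jerison–Lebeau inequality~\eqref{jerison-lebeau} and Proposition~\ref{sobolev} are quoted from results established on closed manifolds, so one must genuinely pass through the double‑manifold construction of Theorem~\ref{double}, checking that it preserves the $W^{2,\infty}$ structure and the $W^{1,\infty}$ metric and density and sends Dirichlet/Neumann eigenfunctions to eigenfunctions of the glued operator. Relatedly, if one did follow the semigroup route for the gradient, one would need that Gaussian gradient heat‑kernel bounds hold for divergence‑form operators with merely Lipschitz coefficients — true, but precisely at the threshold where the $W^{1,\infty}$ hypothesis is used; the elliptic‑regularity argument above sidesteps this.
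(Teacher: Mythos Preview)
The paper does not prove Proposition~\ref{sobolev} at all: it is simply quoted from \cite[Proposition~2.1]{BM} and used as a black box, so there is no in-paper argument to compare your heat-semigroup/elliptic-regularity sketch against. Your sketch is nonetheless a correct way to obtain it; the subordination formula together with the Nash ultracontractive bound gives the $L^\infty$ part, and the $W^{2,p}$ Calder\'on--Zygmund estimate for divergence-form operators with Lipschitz coefficients on a $W^{2,\infty}$ domain handles the gradient, so taking $\sigma>d/2+2$ is legitimate.

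For the remainder of Theorem~\ref{global} your proposal coincides with the paper's: one plugs Proposition~\ref{sobolev} into~\eqref{eq12}, moves $\|e\|_{L^2}^{1-\alpha}$ to the left, and absorbs the polynomial $(1+\lambda)^{(\sigma+1)(1-\alpha)/\alpha}$ into $e^{C\lambda}$. The only difference is in the passage from $\sup_\omega$ to $L^1(\omega)$. The paper first records the $\sup$-norm inequality~\eqref{eq.14}, then defines the level sets $F=\{|e|\le\tfrac{1}{4K_0}e^{-M_0\lambda}\|e\|_{L^2}\}$, $G=\{|\nabla e|\le\tfrac{1}{4K_0}e^{-M_0\lambda}\|e\|_{L^2}\}$, shows by contradiction (reapplying~\eqref{eq.14} to $E=F\cap G$) that $|E|\le m/2$, and integrates over $F^c\cap\omega$ or $G^c\cap\omega$. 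Your route---applying Theorem~\ref{result3} directly to the Markov set $\omega'=\{|e|+|\nabla e|\le 2|\omega\cap\Gamma_{r_0}|^{-1}(\|e\|_{L^1(\omega)}+\|\nabla e\|_{L^1(\omega)})\}$---is the same Chebyshev idea in a slightly more streamlined packaging; both rely on the fact, emphasized in the paper, that the constants in Theorem~\ref{result3} are uniform once the measure of the control set stays above a fixed $m_0>0$.
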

It follows that, 
$$\| \nabla_x e \|_{L^\infty}+  \| e\|_{L^\infty} \leq C \Vert e \Vert_{\mathcal{H}^\sigma} \leq C'(1+ \lambda^\sigma)\Vert e \Vert_{L^2(\Omega)}.$$

As a consequence, we get from~\eqref{eq12},
$$ \| e\| _{L^2(\Omega)} \leq Ce^{\lambda (r_0+D)} \big( \lambda \sup_{\omega\cap D_{r_0}(Q)} \vert e\vert+ \sup_{\omega\cap D_{r_0}(Q)} \vert\nabla_x e \vert\big)^\alpha \big((1+ \lambda) (\lambda^\sigma+1) \big)^{1-\alpha}\Vert e\Vert^{1-\alpha}_{L^2(\Omega)}.$$
 Since $\lambda \geq 1$ we obtain eventually,
$$ \|e\|^\alpha _{L^2(\Omega)} \leq C (1+ \lambda)^{(1- \alpha) \sigma +1} e^{\lambda (r_0+D)}\big(  \sup_{\omega} \vert e\vert+ \sup_{\omega} \vert\nabla_x e\vert \big)^\alpha
$$
and consequently (we can assume $r_0 \leq 1$)
\begin{equation}\label{eq.14}
\begin{aligned}
 \|e\| _{L^2(\Omega)} &\leq C^{\frac 1 \alpha }  (1+ \lambda)^{\frac{{(1- \alpha) \sigma +1} }{\alpha}} e^{\lambda\frac{ (1+D)} \alpha} \big( \sup_{\omega} \vert e\vert+ \sup_{\omega} \vert\nabla _x e\vert\big) \\
 &\leq K e^{M\lambda} \big( \sup_{\omega} \vert e\vert+ \sup_{\omega} \vert\nabla_x e \vert\big)
 \end{aligned}
 \end{equation}
 
To conclude it remains to replace the sup norm in the right hand side  of~\eqref{eq.14} by an $L^2$ norm. To do so, we use that according to~\cite[Theorem 5.1]{LM2}  the constants $C, \alpha$ appearing in~\eqref{dirichlet}, \eqref{neumann} (and hence also the constants $K, M$ in~\eqref{eq.14}) remain bounded independently of $m,$ as long as $|\omega|_{d-1} \geq m$ with $m \geq m_0 >0$.
Let us take $|\omega|_{d-1} \geq m \geq 2m_0$ and set $K_0= K(\frac m {2}), M _0 = M (\frac{m} {2})$.
Let, 
\begin{align*}
&F= \{ x\in \omega : |e (x)\vert \leq \frac 1 {4K_0}e^{-M_0\lambda}\| e\|_{L^2(\Omega)}\},\\
&  G= \{ x\in \omega :  |\nabla_x e(x)\vert \leq  \frac 1 {4K_0} e^{-M_0\lambda}\| e\|_{L^2(\Omega)}.\} 
\end{align*}
and $E= F\cap G$. If,
  $$ |E| \geq \frac m {2} $$ 
then from ~\eqref{eq.14} (applied with $\omega$ replaced by $E$), 
$$ \|e\| _{L^2(\Omega)} 
 \leq K_0 e^{M_0\lambda} \big( \sup_{E} \vert e\vert+ \sup_{E} \vert\nabla_x e\vert\big)\leq\frac 1 2 \| e\|_{L^2(\Omega)},
 $$ which is absurd. We deduce that, 
 $$ |E| \leq \frac m {2}.$$
 (Notice that this case contains in particular the case where $E = \emptyset$).
 
 Since $\vert \omega\vert \geq m$ we deduce that  $\vert E^c \cap \omega\vert \geq \frac{m}{2}.$ Now $E^c \cap \omega = \big(F^c \cap \omega\big) \cup \big(G^c \cap \omega \big)$
 which implies either $|F^c\cap \omega | \geq \frac m 4$, or $|G^c \cap \omega| \geq \frac m 4$.  In the first case we can write,
 $$\int_\omega \vert e(x)\vert\, d\sigma \geq \int_{F^c \cap \omega} \vert e(x)\vert\, d\sigma \geq \frac{m}{4}\frac 1 {4K_0}e^{-M_0\lambda}\| e\|_{L^2(\Omega)},$$
 so
 $$\| e\|_{L^2(\Omega)} \leq \frac{16 K_0}{m} e^{M_0 \lambda}\int_\omega \vert e(x)\vert\, d\sigma,$$
 and in the second case we get,
 $$\| e\|_{L^2(\Omega)} \leq \frac{16 K_0}{m} e^{M_0 \lambda}\int_\omega \vert \nabla_xe(x)\vert\, d\sigma,$$

 Therefore in both cases we get, 
 $$ \| e\|_{L^2(\Omega)} \leq \frac{16 K_0}{m} e^{M_0 \lambda} \int_{\omega} (|e| + |\nabla_xe|)d\sigma .$$
  
\end{proof}

Let us end this section with a final remark (which also follows from Logunov-Malinnikova's results) about Bourgain and Wolf counter example~\cite{BW}
\begin{rema} The counter example of Bourgain and Wolf cannot be extended locally accross the boundary $\partial \mathbb{R}^d_+$ as a harmonic function 
\end{rema} 
Indeed, if it was possible to extend it as a harmonic function, it would vanish, as well as its normal derivative  on a set of $\omega$ of positive $(d-1)$ Lebesgue measure. However, almost every point in $\omega$ is a Lebesgue point and for all such Lebesgue points, $x_0$, it is easy to see that the tangential gradient also vanishes, because for all tangential directions $w$, we can find a seqnence $y_n \in  \omega$ such that 
$$ \lim_{ n\rightarrow + \infty}\frac {y_n- x_0}{ \| y_n - x_0\|}= w ,$$
which implies $\frac{ \partial u} { \partial w} =0$. We deduce that $\nabla_x u$ vanishes on a subset of $\partial \mathbb{R}^d$ positive $(d-1)$  Lebesgue measure, and consequently $\nabla_x u=0$.

\end{document}